\newtheorem{lemma}{Lemma}[section]
\newtheorem{proposition}[lemma]{Proposition}
\newtheorem{definition}[lemma]{Definition}
\theoremstyle{definition}
\newtheorem{remark}[lemma]{Remark}
\numberwithin{equation}{section}
\newcommand{\wt}{\widetilde}
\newcommand{\om}{\omega}
\newcommand{\erre}{\mathbb{R}}
\newcommand{\pd}[2]{\frac{\partial {#1}}{\partial {#2}}}
\newcommand{\n}{\noindent}
\newcommand{\R}{\mathbb R}
\newcommand{\RE}{\mathbb R}
\newcommand{\M}{\mathcal{M}}
\newcommand{\sech}{\operatorname{sech}}
\newcommand{\arctanh}{\operatorname{arctanh}}
\newcommand{\lf}{\left}
\newcommand{\ri}{\right}
\newcommand{\ve}{\varepsilon}
\newcommand{\al}{\alpha}
\newcommand{\bt}{\beta}
\newcommand{\ga}{\gamma}
\newcommand{\la}{\lambda}
\newcommand{\ome}{\omega}
\newcommand{\ul}{\underline}
\newcommand{\f}{\frac}
\newcommand{\be}{\begin{equation}}
\newcommand{\ee}{\end{equation}}
\newcommand{\beq}{\begin{equation}}
\newcommand{\eeq}{\end{equation}}
\newcommand{\G}{{\mathcal G}}
\newcommand{\tbt}{\tilde\bt}
\newcommand{\tome}{\tilde\ome}
\tikzstyle{nodino}=[circle,draw,fill,inner sep=0pt,minimum size=0.5mm]
\tikzstyle{nodo}=[circle,draw,fill,inner sep=0pt,minimum size=1.5mm]
\tikzstyle{infinito}=[circle,inner sep=0pt,minimum size=0mm]
\title{Stable standing waves
  for a NLS on star graphs as local minimizers of the constrained energy}
\author{Riccardo Adami$^* \! \!$, Claudio
  Cacciapuoti$^\dagger \! \!$, Domenico
  Finco$^\ddagger \! \!$, Diego
  Noja$^\S \!$ 
\\ \ \\{\small   $^*$Dipartimento di Scienze
Matematiche ``G.L. Lagrange'', Politecnico di Torino } \\ {\small
Corso Duca degli Abruzzi, 24, 10129 Torino, Italy} \\ 
{\small e-mail: {{riccardo.adami@polito.it}}}
\\ \ \\{\small   $^\dagger$Dipartimento di Scienza e Alta Tecnologia,
  Universit\`a dell'Insubria} \\
{\small Via Valleggio, 11, 22100 Como, Italy}
\\
{\small e-mail: {{claudio.cacciapuoti@uninsubria.it}}}
\\ \ \\{\small   $^\ddagger$Facolt\`a di Ingegneria, Universit\`a
  Telematica Internazionale Uninettuno }\\
{\small Corso Vittorio Emanuele II, 39, 00186 Roma, Italy}
\\
{\small e-mail: {{d.finco@uninettunouniversity.net}}}
\\ \ \\{\small   $^\S$Dipartimento di Matematica e Applicazioni,
  Universit\`a di Milano Bicocca}\\
{\small Via R. Cozzi, 53, 20125 Milano, Italy}
\\
{\small e-mail: {{diego.noja@unimib.it}}}}
\date{}
\begin{document}

\maketitle

\begin{abstract}

\noindent
On a star graph made of $N \geq 3$ halflines (edges) we consider a
Schr\"odinger equation with a subcritical power-type nonlinearity and
an attractive delta interaction located at the vertex. 
From previous works it is known that there exists a family of standing
waves, symmetric with respect to the exchange of edges, 
that can be parametrized by the mass (or $L^2$-norm) of its elements.
Furthermore, if the mass is small enough, then
the corresponding symmetric standing wave is a ground state
and, consequently, it is orbitally stable. On the other hand, if the mass is above a
threshold value, then the system has no ground state.

\noindent
Here we prove that orbital stability holds for every value of the
mass, even if the corresponding symmetric standing wave
is not a ground state, since it is anyway a {\em local}
minimizer of the energy among functions with the same mass.

\noindent
The proof is based on a new technique that allows to restrict the
analysis to functions made of pieces of soliton, reducing the problem
to a finite-dimensional one. In such a way, we do not need to use 
direct methods of Calculus of Variations, nor linearization procedures.

\end{abstract}





\section{Introduction} \label{sec:introduction}
The subject of nonlinear dynamics on quantum graphs (or {\em
  networks}, see \cite{berkolaiko,kuchment} for an exhaustive introduction)
dates back to the seminal
papers by Ali Mehmeti (see \cite{ali} and references therein), where
the focus was on dispersive properties, and by von Below
\cite{vonbelow}, who first set variational problems on a network. Since then, the interest
increased at a growing rate up to the flourishing of results of the
last decade, motivated by the need for simple models in 
contexts where two features coexist: a basic environment endowed with
branches, junctions, ramifications, and the presence of non-negligible
nonlinear effects. Such models range from quantum optics
(see, e.g., \cite{smi}) to
Bose-Einstein condensation 
(see, e.g., \cite{vidal} and, for a more comprehensive introduction to
physical applications, \cite{noja14}). An important
part of recent results concerns the study of the Nonlinear
Schr\"odinger  Equation (NLS), see e.g. 
\cite{acfn-rmp,acfn-jpa,acfn-aihp,acfn-jde,ast-cv,ast-arxiv,cacciapuoti,
  NPS, quellochefinitosucmp,kevrekidis,matrasulov,hannes}, 
and many of them are concerned with the seek for {\em standing waves},
i.e. solutions to the NLS that preserve the spatial shape and
harmonically oscillate in time (namely, 
solutions of the form $\Phi (t) = e^{i \om t}
\Phi_\omega$, where $\Phi_\omega$ is the space profile, called {\em stationary}
or {\em bound state}), 
or
even for {\em ground states}, i.e., stationary states that minimize the
NLS energy among all functions with the same $L^2$-norm or {\em mass}.

To this regard, in spite of the quasi one-dimensional nature of
networks, the structure of the family of standing waves,
as well as the problem of the existence of a ground state, is far
richer and more complicated than for the NLS on
the line (for recent developments in this direction, see
\cite{ast-cv,ast-arxiv}).

In the present paper we consider a star graph $\G$ made of $N$ ($\geq \, 3$)
halflines that meet one another at the unique vertex $\textsc v$ (see
Fig.\ref{4star}), and
study the dynamics generated on it by the NLS with an attractive
$\delta$-interaction of strength $- \alpha$, $\alpha>0$, placed at $\textsc v$ (for
the precise definitions see formulas \eqref{schrod}, \eqref{accalarenzia}). 
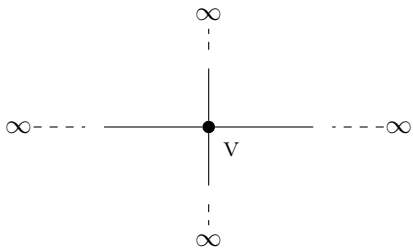
\begin{figure}[h] 
\begin{center}
\begin{tikzpicture}
\node at (-2.5,0) [infinito]  (1) {$\infty$};
\node at (0,0) [nodo] (2) {};
\node at (2.5,0) [infinito]  (3) {$\infty$};
\node at (0,1.5) [infinito]  (4) {$\infty$};
\node at (0,-1.5) [infinito]  (8) {$\infty$};
\node at (1.5,0)  [minimum size=0pt] (5) {};
\node at (-1.5,0) [minimum size=0pt] (6) {}; 
\node at (0,0.9) [minimum size=0pt] (7) {};
\node at (0,-0.9) [minimum size=0pt] (9) {};
\node at (0.3,-0.3) {${{\textsc v}}$};

\draw[dashed] (1) -- (6);
\draw[dashed] (3) -- (5);
\draw[dashed] (7) -- (4);
\draw[dashed] (9) -- (8);
\draw [-] (2) -- (6) ;
\draw [-] (5) -- (2) ;
\draw [-] (2) -- (7) ;
\draw [-] (2) -- (9) ;
\end{tikzpicture}
\end{center}
\caption{A star graph made of four halfline and a vertex.}
\label{4star}
\end{figure}
It is already known
(see \cite{acfn-aihp,acfn-jde}) that for any frequency $\om \in (
{\alpha^2}/ {N^2}, + \infty)$  there exists a unique, real stationary state
$\Psi_\om$ whose associated solution to the NLS oscillates at the
frequency $\om$ and is symmetric under exchange of edges, namely, the 
restriction of $\Psi_\om$ to any halfline gives the same function (see Fig.\ref{fig-psi}). 
\begin{figure}
\begin{center}
\begin{tikzpicture}[xscale= 0.5,yscale=0.8]
\node at (-7.4,0) [infinito]   {$\infty$};
\node at (7.4,0) [infinito]   {$\infty$};
\node at (4.4,2.2) [infinito]   {$\infty$};
\node at (-4,-3.3) [infinito]   {$\infty$};
\draw[-][blue,thick](0,1.2) to [out=340,in=178] (7,0.05);
\draw[-] (0,0) -- (4,0);
\draw[-][blue,thick] (0,1.2) to [out=200,in=2] (-7,0.05);
\draw[-] (-4,0) -- (-0,0);
\draw[-][blue,thick] (0,1.2) to [out=350,in=205] (4,2.1);

\node at (0,0) [nodo] (2) {};

\node at (0,-0.4) {${\textsc v}$};
\draw[dashed] (4,0) -- (7,0);
\draw[dashed] (-7,0) -- (-4,0);
\draw[dashed] (-4,-3) -- (-2,-1.5);
\draw[-] (-2,-1.5) -- (0,0);
\draw[-] (0,0) -- (2,1);
\draw[dashed] (2,1)--(4,2);
\draw[-,blue,thick](-4,-2.98) to [out=40,in=250] (0,1.2);
\draw[dotted] (0,0)--(0,1.2);

\end{tikzpicture}
\end{center}
\caption{A representation of a symmetric stationary state $\Psi_\om$
  on the star graph made of four halflines.}
\label{fig-psi}
\end{figure}
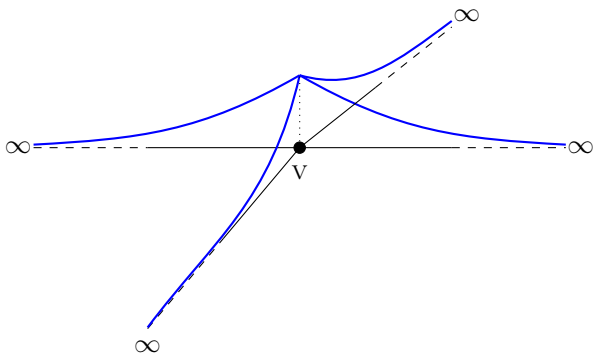
The main result
of this paper is the orbital stability of $\Psi_\om$
and can be expressed as follows:

\medskip

\noindent
{\bf Theorem 1.} {\em
On the star graph $\G$ made of $N \geq 3$ halflines intersecting
one another at the vertex $\textsc v$, consider the 
Schr\"odinger Equation \eqref{schrod} with a focusing nonlinearity of
power $2 \mu + 1$ and a delta interaction at $\textsc v$ with strength
$- \alpha$, with $0 < \mu < 2$ and $\alpha > 0$. 

\noindent
Then, given $\om > \alpha^2 / N^2$, the unique standing wave
$e^{i \om t} \Psi_\om$ 
symmetric
under exchange of edges, is orbitally stable.

}

\medskip
We recall here that orbital stability is Lyapunov stability for orbits
  instead of states. Indeed, in order to hold for the
  dynamics generated by the NLS, that enjoys phase invariance,
Lyapunov stability must be weakened: it cannot hold for states, but it
may hold for orbits. In other words, Theorem 1 establishes that
 a solution remains 
  arbitrarily close {\em to the orbit} of $\Psi_\om$, provided
  that the initial data had been chosen as suitably close {\em to the same orbit}.

With Theorem 1 we
complete the analysis carried out in \cite{acfn-aihp},
where the existence of a ground state at a fixed mass has been
established assuming that the mass  
is smaller than a critical value. In that case, orbital stability follows
from the fact that $\Psi_\om$ is a ground state, so the celebrated general
result by Cazenave and Lions \cite{cl82} applies. On the other
hand, if its mass is larger than the critical threshold, then
$\Psi_\om$ is {\em not} a ground state (see point 4. in Section \ref{section:notation}),
therefore
one is forced to use a criterion for orbital stability for
which it is not necessary to assume that $\Psi_{\om}$ is a ground
state. In fact, Theorem 3 in \cite{gss1} establishes that a stationary
state is orbitally stable if and only if it is a {\em local} minimum
for the energy functional among the functions with the same mass. 
Even though, in general, proving that a function is a local minimum of a functional
is a difficult task, in our case it is possible to exploit the
particular structure of the star graph made of $N$ halflines, and  Th. 4.1 in \cite{ast-cv} which states that  the minimizer of the NLS energy on the halfline under the mass
constraint and a nonhomegeneous Dirichlet condition at the origin is given by a
 unique {\em soliton branch} (we recall its explicit expression in Sec. \ref{section:notation}).  This
fact allows to introduce the so
called {\em multi-soliton transformation}, that maps almost every function
on the graph into a function (\emph{multi-soliton}) made of $N$ pieces
of solitons, one for each 
halfline, in such a way that the mass is preserved and the energy is
lowered. The space of multi-solitons with the same mass, denoted by $\mathcal{M}$ (see Def. \ref{multimani}),  is a {\em finite-dimensional} manifold that contains all the 
stationary states. Thus, proving that a stationary state is a
local minimum in $\mathcal{M}$ 
requires a  finite-dimensional analysis only. Nevertheless, this turns out to
be not an immediate issue, because the study of the sign of the Hessian of the
energy requires a certain degree of explicitness. In order to get it,
one has to further reduce the problem from $N$ halflines to two, and
use the fact that on the real line the orbital 
stability has been already proven by Fukuizumi, Ohta and Ozawa in
\cite{foo08}. Finally, owing to the  continuity of the multi-soliton
transform, one shows that the symmetric stationary state is not only a
local minimum of the energy among the multi-soliton states, but also among 
all states with the same mass.

\smallskip

The statement of Theorem 1 emphasizes the dynamical content of the
result, i.e. the orbital stability. It emerges from the preceding
discussion that 
one can give a variational version of the same result, that highlights
the fact that the examined standing wave
corresponds in 
fact to a {\em local} minimizer of the energy in the appropriate
space, which is in general a highly 
non-trivial goal in the Calculus of Variations. For this reason we
remark that Theorem 1 can be stated also in the following variational
version: 

\smallskip

\noindent
{\bf Theorem 1'.} (Variational version)
{\em
On the star graph $\G$ made of $N \geq 3$ halflines intersecting
one another at the vertex $\textsc v$, consider the 
Schr\"odinger Equation \eqref{schrod} with a focusing nonlinearity of
power $2 \mu + 1$ and a delta interaction at $\textsc v$ with strength
$- \alpha$, where $0 < \mu < 2$ and $\alpha > 0$. 

\noindent
Then, given $\om > \alpha^2 / N^2$, the unique positive bound state
$\Psi_\om$ 
symmetric
under exchange of edges, is a strict (up to phase invariance) local minimizer for the energy
functional associated to the considered evolution equation,
constrained to the manifold of constant $L^2$-norm.
}

\medskip

In \cite{acfn-jde} we treated the dual problem, namely the
minimization of the {\em action} functional on the associated Nehari
manifold. It was proved that in order to have a minimizer, the delta
interaction must be strong enough. The link with the small mass
condition given in \cite{acfn-aihp} can be reconstructed by noting
that, given a frequency $\omega$, the mass of $\Psi_\om$ is a
monotonically decreasing function of the interaction 
strength $\alpha$,
so that the assumption of large $\alpha$ can be thought of as a small
mass hypothesis. Through Theorem 1 one can get rid of every such assumptions:
$\Psi_\om$ is always orbitally stable, irrespective of the mass and of
the interaction strentgh.
Notice that in \cite{acfn-jde} it was proven that the energy
constrained at constant mass has also other stationary points than the
symmetric states above discussed and so the NLS on a star graph admits
several families of non symmetric standing waves. In \cite{acfn-aihp}
it is shown 
that they are excited states, in the sense that their energy is above
the energy of the symmetric state.
They are believed to be saddle points
of the constrained energy, which is consistent with the fact that the
local minimum represented by the symmetric state is above the infimum
of the energy for large mass.

Theorem 1 is already known to hold for $N=2$ too, i.e. for the NLS with an
attractive delta interaction on the line, since in that case
$\Psi_\om$ (which is the unique bound state at its frequency and at
its mass) is always a ground state (see \cite{anv,fukujean,foo08,lacozza}).

The structure of the paper is the following: in Section
\ref{section:notation} we introduce the notation, recall the state of the
art (in particular we explain the result of Th. 4.1 in \cite{ast-cv}), give
some preliminary results, introduce and describe the {\em
  multi-soliton manifold} $\mathcal{M}$. 
In Section \ref{section:transforms} 
we introduce the {\em multi-soliton transformation}, that allows to
reduce the problem to a finite-dimensional setting.  
In
Section \ref{section:results} we prove that $\Psi_\om$ is actually a
local minimum for the restriction of the NLS energy to $\mathcal{M}$. Finally, in Section \ref{section:orbital} we
extend the minimality property of $\Psi_\om$ from the multi-soliton
manifold to the natural space of the functions with finite energy and
constant mass, so accomplishing the proof of Theorem 1.

\medskip

\noindent
{\bf Acknowledgements.} The authors are partially supported by the
FIRB 2012 project ``Dispersive dynamics: Fourier Analysis and
Variational Methods'', code RBFR12MXPO. Furthermore, R.A. is grateful to the PRIN 2012
project ``Aspetti variazionali e perturbativi nei problemi
differenziali nonlineari'' and to the 2015 GNAMPA project ``Propriet\`a
spettrali delle equazioni di Schr\"odinger lineari e nonlineari''. C.C. acknowledges the support of the FIR 2013 project ``Condensed Matter in Mathematical Physics", code RBFR13WAET.

\section{Setting, notation, previous results} \label{section:notation}
We follow in notation the previous papers \cite{acfn-jpa, acfn-epl,acfn-aihp,
  acfn-jde}. The basic environment is
the metric graph $\G$, defined as the star graph made of $N$ halflines
intersecting at their origins, where the unique vertex $\textsc v$ is
located. 
This convention fixes a coordinate
system on $\G$, namely, each edge is identified with the real
nonnegative halfline $[0, + \infty)$ and is endowed with its own
  coordinate (denoted by $x_j$ for the $j$.th edge), while $x_j = 0$
  is the coordinate of $\textsc v$, regardless of $j$. Functions $\Psi
  : \G \longrightarrow \mathbb C$ can be
represented as vectors where the  component on the $j.$th halfline is a scalar function
$\psi_j : [0, +\infty) \longrightarrow \mathbb C$. 
For the sake of clarity, we shall often use the column vector
representation
$$
\Psi \ = \ \left( \begin{array}{c}
\psi_1 \\ \psi_2 \\ \vdots \\ \psi_N
\end{array}
\right).
$$
Besides, the component of $\Psi$ on the $j$.th edge is denoted by
$\psi_j$ or by $(\Psi)_j$.
In general, we use uppercase greek letters for functions defined on
$\G$, and lowercase greek letters for functions acting on the halfline
or on the line.

We denote by $\Psi'$ and $\Psi''$ the vector valued functions with components $\psi_j'$ and $\psi_j''$ respectively, the derivatives taken along the coordinate $x_j$. 

Spaces $L^p(\G)$ are naturally defined as
  the direct sum of $N$ copies of $L^p (0, +\infty)$, while $H^1
  (\G)$
  is the direct sum of $N$ copies of  $H^1 (0, +\infty)$ with the
  additional condition of {\em continuity at the vertex}
\be \nonumber 
\psi_1 (0) = \psi_2 (0) = \cdots = \psi_N (0) = \Psi (\textsc v).
\ee

On $\G$ we consider the dynamics generated by the {\em Schr\"odinger
  equation with power nonlinearity and an attractive point interaction
at the vertex}, formally
\be \nonumber 
i \partial_t \Psi (t) \ = \ - \Delta \Psi (t) - |\Psi (t)|^{2\mu} \Psi
(t) - \alpha \delta_{\textsc
  v} \Psi (t),
\ee
where $\mu> 0$  and the notation $|\Psi|^{2\mu} \Psi$  is understood in vector representation as 
$$
|\Psi|^{2\mu} \Psi \ = \ \left( \begin{array}{c}
|\psi_1|^{2\mu} \psi_1\\ |\psi_2|^{2\mu} \psi_2\\ \vdots \\ |\psi_N|^{2\mu} \psi_N 
\end{array}
\right),
$$
moreover, $\alpha > 0$ and $\delta_{\textsc
  v}$ denotes a delta potential located at the vertex. More
precisely,
\be \label{schrod}
i \partial_t \Psi (t) \ = \ H \Psi (t)  - |\Psi (t)|^{2\mu} \Psi
(t),
\ee
where $H$ is the linear operator defined as 
\be \label{accalarenzia} \begin{split}
D (H) \ = \ & \{ \Psi \in H^1 (\G), \ {\rm s.t.} \ \psi_j \in H^2 (0,
+ \infty), \
\sum_{j=1}^N \psi_j'(0) = -\alpha \Psi (\textsc{v}) \} \\
(H \Psi)_j \ = \ & - \psi_j'' \ .
\end{split}
\ee
Global well-posedness of \eqref{schrod} (for $0 <
\mu < 2$) has been proved first in
\cite{acfn-rmp} for the cubic case, then in \cite{acfn-jde} for the
case of a more
general power, and for the case $N=2$ (real line) with a general point
interaction in \cite{an-jpa}. Furthermore, it has been also proved that the
$L^2$-norm, or {\em mass}
\be \nonumber 
Q (\Psi) \ = \ \| \Psi \|_{L^2 (\G)}^2
\ee
 and the energy
\begin{equation} \label{energy}
E (\Psi, \G) \ = \ \f 1 2 \| \Psi' \|_{L^2(\G)}^2 - \f 1 {2 \mu + 2} \| \Psi
\|_{L^{2 \mu + 2}(\G)}^{2 \mu + 2} - \f \alpha 2 | \Psi (0) |^2 
\end{equation}
are conserved by the flow.

We shall occasionally make use of the functionals $E (\cdot, \R^+)$
and $E (\cdot, \R)$, that share the formal expression of
\eqref{energy} but are evaluated on functions on the halfline and on
the line, respectively.

As explained in Sec.\ref{sec:introduction}, we aim at proving that
$\Psi_\om$ is a local minimizer for $E( \cdot, \G)$ with the mass
constraint, so, chosen $M > 0$, our reference space is
\be \nonumber 
H^1_M (\G) \ : = \ H^1(\G) \cap \{ Q (\Psi) = M \}. 
\ee

Let us now recall some preliminary notions, 
together with some further basic definitions and notation.

\smallskip

1. For every $\omega > 0$, the {\em soliton}
\be \nonumber 
\phi_\om (x) = [ (\mu + 1) \omega]^{\f 1 {2 \mu}} 
\sech^{\f 1 \mu} (\mu \sqrt \om x)
\ee
is the unique positive square-integrable 
solution to the stationary NLS equation
$$
\varphi'' (x) + \varphi^{2 \mu + 1} (x) \ = \ \omega \varphi (x).
$$
As a consequence,
the function $e^{i \om t} \phi_\omega$ is a standing wave for the NLS on the
line with power nonlinearity $2 \mu + 1$. Besides, for any $\omega>0$ the \emph{mass} of the soliton  is given by  
\begin{equation}\label{solmass}
\| \phi_\om \|_{L^2 (\R)}^2 = 2\f{(\mu+1)^{\f 1 \mu }  }{\mu} \ome^{ \f 1 \mu - \f
  1 2} \int_0^1 (1-t^2)^{\f 1 \mu -1} dt ,
\end{equation}
and is a monotonically increasing function of $\om$.

\smallskip

2. As proved in \cite{acfn-jde}, for every $\alpha > 0$, the 
unique positive stationary solution to
Eq. \eqref{schrod} with frequency $\omega > \alpha^2 / N^2$, symmetric under exchange
of edges, is given by
\beq
\lf(\Psi_{\omega}\ri)_i(x_i) = 
\phi_\ome(x_i + \zeta), \quad  i=1, \ldots, N
\nonumber 
\eeq 
with 
\beq 
\zeta = \f{1}{\mu \sqrt{\ome}} \arctanh
\lf(\f{\al}{N \sqrt{\ome}} \ri) .
\label{states2} 
\eeq 
Notice that $\zeta > 0$, so that 
every halfline hosts a {\em soliton tail}, which is a monotonically
decreasing function (see Fig.\ref{fig-psi}).

\smallskip
3. The {\em mass function}
\begin{equation}\label{darker}
M (\omega) \ : = \ Q
(\Psi_{\omega})  \ = \ N \f {(\mu + 1)^{\f 1 \mu}} \mu \om^{\f 1 \mu -
  \f 1 2}
\int_{\f \alpha {N \sqrt \om}}^1 (1 - t^2)^{\f 1 \mu - 1} dt,
\end{equation}	
(see formula (5.1) in \cite{acfn-aihp})
 is strictly monotonically increasing, and
ranges from $0$ (excluded)
to
$+\infty$ as $\omega$ goes from $\alpha^2/N^2$ (excluded) to $+\infty$. 
Then, in the same way as for the soliton on the real line,  $\omega$ can be interpreted as a relabelling of
the mass, and for every positive $M$ there exists exactly one
symmetric stationary state $\Psi_{\omega}$ 
with mass $M$ (see \cite{acfn-aihp}).
 
\smallskip
4. As mentioned in Sec.\ref{sec:introduction}, there are some
values of $M$ such that the corresponding stationary state $\Psi_\om$ is
{\em not} a ground state. Indeed, in \cite{acfn-jde}  we exhibited a
sequence $\Phi_n \in H^1_M (\G)$ s.t. $E(\Phi_n, \G) \to E (\phi_{\om_\R}, \R)$ as
$n$ goes to infinity (see formula (3.12) in \cite{acfn-jde}), where
$\om_\R$ is the unique value of $\om$ such that $\| \phi_\om \|_{L^2
  (\R)}^2 = M$, see Eq. \eqref{solmass}. Roughly
speaking, such a sequence is supported on a single edge and
asymptotically reconstructs a soliton at infinity. Therefore, in order
for $\Psi_\om$ to be a
ground state, it must be $E(\Psi_\om, \G) \leq E (\phi_{\om_\R},
\R)$. By explicitly computing the involved energies, see also formulas (4.10) and (4.12) in \cite{acfn-aihp},
such inequality can be rewritten as
\be \label{violated}
(2 - \mu ) \om_\R M \ \leq \ (2 - \mu) \om M + \alpha \mu (\mu + 1)^{\f 1
  \mu} \left( \om - \f {\alpha^2}{N^2} \right).
\ee
Furthermore, since $\|\Psi_\omega\|_{L^2(\G)}^2 = M = \|\phi_{\omega_\RE}\|^2_{L^2(\RE)}$, and  from Eqs. \eqref{solmass} and \eqref{darker}, one has the identities  
\be \label{massse} \begin{split}
M  \ & \ =  2\f{(\mu+1)^{\f 1 \mu }  }{\mu} \ome_\RE^{ \f 1 \mu - \f
  1 2} \int_0^1 (1-t^2)^{\f 1 \mu -1} dt  \\
& \  = \ N \f{(\mu+1)^{\f 1 \mu
}  }{\mu}  \ome^{ \f 1 \mu - \f 1 2}  \int_{ \frac{\al}{ N
    \sqrt{\ome}}}^1 (1-t^2)^{\f 1 \mu -1} dt. 
\end{split} 
\ee
Hence, for $M$ large, the l.h.s. of \eqref{violated} is of order $M^{\f {\mu + 2} {2 - \mu}}$, as
the first term in the r.h.s., while the second term in the r.h.s. is
of order $M^{\f {2} {2 - \mu}}$, then it can be
neglected. Expliciting $\om_\R$ and $\om$ as functions of $M$ in
\eqref{massse}, one has that inequality \eqref{violated} amounts to 
$$
 \int_0^1 (1-t^2)^{\f 1 \mu -1} dt \ \geq \ \f N 2  \int_{ \frac{\al}{ N
    \sqrt{\ome}}}^1 (1-t^2)^{\f 1 \mu -1} dt,
$$
that is violated for $M$ large, as $\om$ becomes large too. Then,
large mass implies that $\Psi_\om$ is not a ground state, and, since
by Lemma 5.2 in \cite{acfn-aihp}, $\Psi_\om$ is the minimizer of the
energy among all stationary states, one concludes that there is no
ground state if the mass exceeds a critical
threshold.

\medskip

The last part of the present section is devoted to the introduction of
the finite-dimensional manifold to which we shall reduce the problem.

We preliminarily recall
Theorem 4.1 of \cite{ast-cv}, which establishes that, given $a, m >
0$, there exists a {\em unique} couple $\omega > 0, \xi \in \R$ such that
\begin{equation}\label{giveup} \int_0^{+\infty}\phi^2_\omega (x +
\xi) \, dx  = m, \qquad \phi_\omega (\xi) = a.\end{equation}
Furthermore, the function $\phi_\omega ( \cdot + \xi)$ minimizes the
energy 
\[\f12\|\phi'\|_{L^2(\RE^+)}^2-\f1{2\mu+2}\|\phi\|_{L^{2\mu+2}(\RE^+)}^{2\mu+2}\] among the functions in $H^1 (\R^+)$ with mass $m$ and whose value at zero equals 
$a$ (Dirichlet constraint), hence, within such class of functions, it is also the minimizer of $E(\cdot,\RE^+)$.  

\noindent
By this result one can introduce two functions 
$\omega = \omega (m,a), \xi = \xi (m,a)$, that give the value of the
soliton parameters $\om$ and $\xi$ such that $\phi_\om (\cdot + \xi)$
minimizes the functional $E (\cdot, \R^+)$ with both mass and
Dirichlet constraints given by Eq. \eqref{giveup}.

As we will show in Sec.\ref{section:transforms}, the existence of such
functions is the starting point for the reduction of the problem to a
finite-dimensional manifold, whose definition is:

\begin{definition} \label{multimani}
Fixed $M > 0$,
we call {\em multi-soliton manifold} of mass $M$,  and denote  by $\M$,
the subset of $H^1_M (\G)$ made of functions whose restriction at every halfline of
$\G$ gives a piece of soliton.
\end{definition}

\begin{remark}
Notice that all functions in $\M$ are positive. 
\end{remark}

\begin{remark} \label{2.2}
Every element of $\M$ has the following form:
\begin{align} \label{drei}
\Phi_{\ul m,a} = 
& \left( \begin{array} {c}
  \phi_{\omega(m_1, a)}
( \cdot + \xi (m_1, a)) \\  \phi_{\omega(m_2, a)}
( \cdot + \xi (m_2, a))\\  \vdots \\
\phi_{\omega (M -\sum_{j=1}^{N-1} m_j, a)}
( \cdot + \xi 
  (M -\sum_{j=1}^{N-1} m_j, a)) 
\end{array}
\right)
\end{align}
where

\begin{itemize}
 
\item  we used the notation $\ul m : = (m_1, m_2, \dots m_{N-1})$,
  with $m_i > 0$ and $\sum_{i=1}^{N-1} m_i < M$;

\item by the definition of the functions $\om$ and $\xi$,
$m_i$ is the mass located on the $i$.th edge, i.e.,
\begin{equation}\label{spell1}
\int_0^\infty \phi_{\omega(m_i,a)}^2(x+\xi(m_i,a)) \,  dx = m_i,
\end{equation} 
and
 $a$
is the value attained by $\Phi_{\ul m, a}$ at $\textsc v$, in particular,  for
all $i = (1, \dots, N-1)$ 
\begin{equation}\label{spell2}
\phi_{\omega(m_i,a)}(\xi(m_i,a))  = a.
\end{equation}
\end{itemize}
Notice that the global mass constraint $M (\Phi_{\ul m, a}) = M$ is guaranteed by the last compoment of
the vector in the r.h.s. of \eqref{drei}.

Since the function $\Phi_{\ul m, a}$ depends on
$N$ parameters $m_1, \dots, m_{N-1},a$, $\M$ is a $N$-dimensional
submanifold of $H^1_M (\G)$.
\end{remark}

\begin{remark}
The real symmetric stationary state $\Psi_{\om}$ with $Q(\Psi_\om) = M$, belongs to
$\M$. Indeed, it is immediately seen that
$
\Psi_{\om} = \Phi_{\wt {\ul m}, \widetilde a},
$
where the vector $\wt {\ul m}$ reads
$\wt m_1 = \dots = \wt m_{N-1} = \f M N$
and $\widetilde a$ is  defined as the unique solution to the transcendental equation 
$$
 \widetilde a =  \phi_{\om(M/N, \widetilde a)}(\xi(M/N, \widetilde a))
            \ = \ [(\mu+1)\omega (M/N, \widetilde a)]^{\frac1{2\mu}} \left( 1 - \f {\alpha^2}
{N^2 \omega(M/N, \widetilde a)}  \right)^{\f 1 {2 \mu}},
$$
where we observed that $\xi(M/N, \widetilde a) = \zeta$ and then used \eqref{states2}.
\end{remark}
   
\medskip

When dealing with functions in the manifold $\M$, the energy
functional becomes a real-valued function of $N$ real, positive
variables. We emphasize this change of point of view by introducing
the {\em reduced energy} $E_r$ as 
\be \label{eenne} \begin{split}
E_r \ :  \ \{ (\ul m, a) & \in (0, + \infty)^{N}, \  {\rm{s.t.}}
\  \sum_{i=1}^{N-1} m_i <  M \} \to \R \\ 
E_r ( \underline{m}, a) \ : = & E (\Phi_{\ul m,a}, \G).
\end{split}
\ee

\noindent
Let us define the
function $F : [0,M] \times \R^+ \to \RE$ as
\be \label{effe} \begin{split}
F (m, a) \ = \ &  \frac12 \|\phi_{\ome(m, a)}'(\cdot+\xi(m,
a))\|_{L^2(\RE^+)}^2 \\ & -
\frac1{2\mu+2} \|\phi_{\ome(m, a)}(\cdot+\xi(m, a))\|_{L^{2\mu+2}(\RE^+)}^{2\mu+2}
-\frac{\al}{2N} a^2 
\end{split}
\ee
so that the function $E_r$
decomposes as follows
\be \label{energydec}
E_r ( \underline{m}, a) \ = \ 
\sum_{i=1}^{N-1} F (m_i, a) +  F (M - \sum_{j=1}^{N-1} m_j, a).
\ee

\section{Multi-soliton transformation}\label{section:transforms}
The first step of the proof of Theorem 1 consists in reducing the problem to a
finite-dimensional manifold, made of functions obtained by gluing
together pieces of soliton. To this aim, we start by transforming
every function in $H^1_M (\G)$ that does not vanish at $\textsc v$ into
a function of   $\mathcal M$, 
in such a way that the
value at $\textsc v$ and the mass at any edge are preserved.

Owing to Theorem 4.1 in \cite{ast-cv} and to the definition of the
functions $\om$ and $\xi$, introduced in Sec.\ref{section:notation}, it
is possible to give the following 

\begin{definition} \label{soltrans}
The {\em soliton transformation} $S \eta \in H^1 (\R^+)$ of a function $\eta \in
H^1 (\R^+)$ such that $\eta (0) \neq 0$, is defined as 
$$ S \eta \ : = \ \phi_{\omega (m, a)} ( \cdot + \xi
(m, a)), $$ 
where we denoted $m = \int_{\erre^+} | \eta (x) |^2 \, dx$, $a = |
\eta (0)|$.
\end{definition}
By Theorem 4.1 in \cite{ast-cv}, the function $S \eta$ is unique and 
\[\f12\|(S\eta)'\|_{L^2(\RE^+)}^2-\f1{2\mu+2}\|S\eta\|_{L^{2\mu+2}(\RE^+)}^{2\mu+2}\leq\f12\|\eta'\|_{L^2(\RE^+)}^2
- \f1{2\mu+2}\|\eta\|_{L^{2\mu+2}(\RE^+)}^{2\mu+2},\]            
hence, since $(S\eta)(0) = |\eta(0)|$,   Theorem 4.1 in \cite{ast-cv}
implies 
\begin{equation} \nonumber 
E (S \eta, \R^+) \ \leq \ E (\eta, \R^+),
\end{equation}
where equality holds if and only if $\eta = e^{i \theta} \phi_\om (
\cdot + \xi )$, for some values of $\theta, \om$, and $\xi$.

Furthermore, notice that the soliton transformation acts trivially on
pieces of soliton, namely $S \phi_\om (\cdot + \xi) =  \phi_\om (\cdot + \xi)$.

Finally, the soliton transformation is defined
for $a > 0$ only. However, we do not need to cover the case $a = 0$.
The only property we shall use is the continuity of $S$ at the
stationary state $\Psi_{\om}$.
 
\begin{proposition}
The soliton transformation $S$ is continuous from the space $H^1_M (\G)$
to itself. 
\end{proposition}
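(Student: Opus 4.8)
The plan is to exploit the fact that, although $S$ is defined on an infinite-dimensional space, the transformed function $S\Psi$ is completely determined by finitely many scalar data read off from $\Psi$. Applying Definition \ref{soltrans} componentwise, for $\Psi\in H^1_M(\G)$ with $\Psi(\textsc v)\neq 0$ one has $(S\Psi)_j=\phi_{\ome(m_j,a)}(\cdot+\xi(m_j,a))$ with $m_j=\|\psi_j\|_{L^2(\R^+)}^2$ and $a=|\Psi(\textsc v)|=|\psi_j(0)|$, so that $S\Psi=\Phi_{\ul m,a}$ in the notation of Remark \ref{2.2}. By \eqref{giveup} each edge satisfies $(S\Psi)_j(0)=a$ and $\|(S\Psi)_j\|_{L^2(\R^+)}^2=m_j$, hence $S\Psi$ is continuous at $\textsc v$ and $\sum_j m_j=\|\Psi\|_{L^2(\G)}^2=M$, so indeed $S\Psi\in\M\subset H^1_M(\G)$. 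Thus $S$ factors as the \emph{sampling} map $\Psi\mapsto(m_1,\dots,m_N,a)$, valued in the open set $\{\ul m>0,\ \sum_j m_j=M\}\times(0,+\infty)$, followed by the \emph{reconstruction} map $(\ul m,a)\mapsto\Phi_{\ul m,a}$. It suffices to prove continuity of each factor; since $\Psi_\om(\textsc v)=\widetilde a>0$ lies in the open set where the factorization is valid, this yields in particular the continuity of $S$ at $\Psi_\om$, which is the only property needed in the sequel (see the discussion following Definition \ref{soltrans}).

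For the sampling map, continuity of $\Psi\mapsto m_j=\|\psi_j\|_{L^2(\R^+)}^2$ follows from the continuous embedding $H^1(\R^+)\hookrightarrow L^2(\R^+)$ and continuity of the squared norm, while continuity of $\Psi\mapsto a=|\psi_j(0)|$ follows from the trace estimate $|\psi_j(0)|\le C\,\|\psi_j\|_{H^1(\R^+)}$, a consequence of $H^1(\R^+)\hookrightarrow C_0([0,+\infty))$, composed with the continuous modulus. As $\G$ has finitely many edges, $\Psi\mapsto(\ul m,a)$ is continuous from $H^1_M(\G)$ into $\R^{N+1}$.

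For the reconstruction map it is enough, edge by edge, to show that $(m,a)\mapsto\phi_{\ome(m,a)}(\cdot+\xi(m,a))$ is continuous from $(0,M)\times(0,+\infty)$ into $H^1(\R^+)$; convergence of all $N$ components in $H^1(\R^+)$ then gives convergence in $H^1(\G)$. This in turn splits into the implicit parameter map $(m,a)\mapsto(\ome(m,a),\xi(m,a))$ followed by the explicit map $(\ome,\xi)\mapsto\phi_\ome(\cdot+\xi)$. The latter is continuous (indeed smooth) into $H^1(\R^+)$: $\phi_\ome$ is given in closed form, it and $\phi_\ome'$ decay exponentially, and $\partial_\ome\phi_\ome(\cdot+\xi)$, $\partial_\xi\phi_\ome(\cdot+\xi)$ are integrable against all the relevant kernels, so differentiation under the integral and dominated convergence give continuity of $\ome\mapsto\phi_\ome(\cdot+\xi)$ and of $\xi\mapsto\phi_\ome(\cdot+\xi)$ in the $H^1$-norm.

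The main obstacle is therefore the continuity of the implicitly defined parameter map. I would obtain it from the implicit function theorem applied to
\[
G(\ome,\xi;m,a)=\left(\int_0^{+\infty}\phi_\ome^2(x+\xi)\,dx-m,\ \phi_\ome(\xi)-a\right)=0,
\]
whose left-hand side is $C^1$ in $(\ome,\xi)$ by the explicit form of $\phi_\ome$, and whose solvability and uniqueness are exactly Theorem 4.1 in \cite{ast-cv}. The delicate point is to verify that the Jacobian $\partial(G_1,G_2)/\partial(\ome,\xi)$ is nonsingular along the solution branch; here one uses $\partial_\xi\int_0^{+\infty}\phi_\ome^2(x+\xi)\,dx=-\phi_\ome^2(\xi)$, the strict monotonicity of $\phi_\ome'(\xi)<0$ (the soliton tail is decreasing), and the monotonicity of the soliton mass recalled in \eqref{solmass}, to rule out a vanishing determinant. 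The implicit function theorem then yields local $C^1$, hence continuous, dependence of $(\ome,\xi)$ on $(m,a)$, and composing the three continuous maps establishes the continuity of $S$, in particular at $\Psi_\om$.
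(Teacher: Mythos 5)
Your overall architecture coincides with the paper's: both factor $S$ (edge by edge) as $\eta\mapsto(m,a)\mapsto(\omega,\xi)\mapsto\phi_\omega(\cdot+\xi)$, and your treatment of the first and third factors (trace and $L^2$-norm continuity on $H^1$; explicit formula plus dominated convergence) is sound and matches the paper's. The problem is the middle factor, which is precisely where all the difficulty sits, and there your argument has a genuine gap. You propose the implicit function theorem for $G(\omega,\xi;m,a)=0$ and correctly identify that everything hinges on the nonvanishing of $\det\,\partial(G_1,G_2)/\partial(\omega,\xi)$ along the solution branch, but you do not actually verify it, and the ingredients you list do not suffice. That determinant equals
\[
\partial_\omega\Bigl(\int_0^{+\infty}\phi_\omega^2(x+\xi)\,dx\Bigr)\,\phi_\omega'(\xi)\;+\;\phi_\omega^2(\xi)\,\partial_\omega\phi_\omega(\xi),
\]
and neither $\partial_\omega\phi_\omega(\xi)$ nor $\partial_\omega\int_\xi^{+\infty}\phi_\omega^2$ has a definite sign for $\xi>0$: by the scaling $\phi_\omega(x)=\omega^{1/(2\mu)}\phi_1(\sqrt\omega x)$, each is the sum of a positive amplitude term and a negative term coming from the dilation of the argument. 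The monotonicity of the \emph{total} soliton mass \eqref{solmass} concerns $\|\phi_\omega\|_{L^2(\R)}^2$, not the tail mass at fixed $\xi$, so it does not control the first factor; and $\phi_\omega'(\xi)<0$ only fixes the sign of one factor in a sum of two terms of a priori ambiguous sign. As stated, "rule out a vanishing determinant" is an assertion, not a proof.

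The missing ingredient is exactly the one the paper leans on: passing to the variable $z=\sqrt\omega\,\xi$, the two constraints combine into $g(z)=m/a^{2-\mu}$ with $g(z)=\phi_1(z)^{-(2-\mu)}\int_0^{+\infty}\phi_1^2(x+z)\,dx$, and the \emph{strict monotonicity} of $g$ (which Theorem 4.1 of \cite{ast-cv} derives from the asymptotics, monotonicity and log-concavity of $\sech$) is what makes the parameter map $(m,a)\mapsto(\omega,\xi)$ well defined and continuous — equivalently, what makes your Jacobian nondegenerate, since in the coordinates $(\omega,z)$ the determinant reduces to a nonzero multiple of $g'(z)$. The paper avoids the implicit function theorem altogether and reads off continuity of $z$, then of $\omega$, then of $\xi$, directly from the monotone relation $g(z)=m/a^{2-\mu}$ and the scaling identity $\phi_1(z)=a\,\omega^{-1/(2\mu)}$. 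Your route can be repaired, but only by importing this monotonicity of $g$; without it the key step does not go through.
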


\begin{proof}
We decompose $S$ in three steps and show continuity at every step.
Schematically,
\begin{equation} \nonumber 
\eta \stackrel{S_1} \mapsto (a,m)  \stackrel{S_2}\mapsto
(\omega, \xi)  \stackrel{S_3} \mapsto \phi_\omega (\cdot + \xi),
\end{equation}
where $a = | \eta (0)|$, $m = \int_0^{+\infty} |\eta (x) |^2 dx$.

First, $S_1$ is continuous because the pointwise value and the
$L^2$-norm are continuous in $H^1 (\R^+)$.

\noindent
As already stated, the fact that the map $S_2$ is well-defined follows
from Theorem 4.1 in \cite{ast-cv}. The proof that it is also
continuous can be made
by closely following the proof of
Theorem 4.1 in \cite{ast-cv}. We sketch the procedure since, due to
differences in the  notation, this step could not be straightforward.

By  using the scaling
 property 
$$
\phi_\omega(x) = \omega^{\frac{1}{2\mu}} \phi_1(\sqrt \omega x )
$$
 in the identities  \eqref{spell1} and \eqref{spell2} we obtain 
\begin{equation}\label{keep0}
 \int_0^\infty \phi_1^2(x+\sqrt \omega \xi) dx = m \omega^{-\frac{2-\mu}{2\mu}}  \qquad \text{and} \qquad   \phi_1(\sqrt \omega \xi) = a \omega^{-\frac{1}{2\mu}}. 
\end{equation}
 Putting them together one obtains the identity  \begin{equation}\label{keep1}g(\sqrt\omega\xi)=\frac{m}{a^{2-\mu}},\end{equation}  with
\begin{equation} \nonumber 
g(z)=\big(\phi_1(z)\big)^{-(2-\mu)}\int_0^\infty\phi_1^2(x+z) dx.\end{equation}  
The function $g$ is continuous and strictly monotonically
decreasing (for the proof of this statement we refer to
the proof of Th. 4.1 in \cite{ast-cv} again). We remark that the definition
of the function $\phi_1$ in \cite{ast-cv} is  slightly different from
ours,  basically the two definitions involve different scalings of the
hyperbolic secant. This is due to the fact that in \cite{ast-cv} the
authors parametrize solitons through the mass instead of the
frequency. In spite of that, the argument in Th. 4.1 still  applies,
being based only the asymptotic, monotonicity,  and log-concavity
properties of $\sech$. 
 
 By \eqref{keep1}, we infer that the quantity $\sqrt\omega \xi$ is a continuous
function of $m$ and $a$.  Moreover, for fixed $m$ and $a$ there exists
a unique value of $\sqrt\omega \xi$ such that  identity  \eqref{keep1}
is satisfied. Furthermore, fixed the quantity $\sqrt{\omega} \xi $,
there exists a unique $\omega$ such that the second identity in
\eqref{keep0}  is satisfied, and such a $\omega$ is a
continuous function of $a$ and $m$. As a consequence $\xi$ is a continuous function of $a$ and $m$ too, and
this proves the continuity of $S_2$. 

In order to prove
the continuity of $S_3$, let us fix the couple $(\om, \xi)$ and prove
that
\be \label{ziel}
\lim_{\om_1 \to \om, \xi_1 \to \xi} \| \phi_{\omega} (\cdot + \xi)
- \phi_{\omega_1} (\cdot + \xi_1) \|_{H^1 (\R^+)} \ = \ 0.
\ee
Preliminarily, we use the triangular inequality
\be \label{triangle} \begin{split} &
\| \phi_{\omega} (\cdot + \xi)
- \phi_{\omega_1} (\cdot + \xi_1) \|_{H^1 (\R^+)}^2 \\
\leq & \ 2 \| \phi_{\omega} (\cdot + \xi)
- \phi_{\omega} (\cdot + \xi_1) \|_{H^1 (\R^+)}^2
+ 2 \| \phi_{\omega} (\cdot + \xi_1)
- \phi_{\omega_1} (\cdot + \xi_1) \|_{H^1 (\R^+)}^2  
\\
\leq & \ 2 \| \phi_{\omega} (\cdot + \xi)
- \phi_{\omega} (\cdot + \xi_1) \|_{H^1 (\R^+)}^2 
+ 2 \int_0^{+\infty} |\phi_{\om} (x + \xi_1) - \phi_{\om_1} (x +
\xi_1)|^2 dx \\ & \ + 2 \int_0^{+\infty} |\phi_{\om}' (x + \xi_1 ) -
\phi_{\om_1}' (x + \xi_1)|^2 
dx \\
\leq & \ 2 \| \phi_{\omega} (\cdot + \xi_1)
- \phi_{\omega} (\cdot + \xi) \|_{H^1 (\R^+)}^2 +
2 \int_{-\infty}^{+\infty} |\phi_{\om} (x) - \phi_{\om_1} (x)|^2 dx 
\\ & \ + 2 \int_{-\infty}^{+\infty} |\phi_{\om}' (x) - \phi_{\om_1}' (x)|^2
dx
\end{split}
\ee
Concerning the first term in the r.h.s., a straightforward computation gives
$$ 
\| \phi_{\omega} (\cdot + \xi) - \phi_{\omega} (\cdot + \xi_1) \|_{L^2(\R^+)}^2
\ \leq \ | \xi - \xi_1 |^2 \| \phi_{\om}'\|_{L^2(\R^+)}^2,
$$ 
and, repeating the same computation for the derivatives, one gets
\be 
\| \phi_{\omega} (\cdot + \xi) - \phi_{\omega} (\cdot + \xi_1 )
\|_{H^1(\R^+)}^2 \  \leq \ | \xi - \xi_1 |^2 \| \phi_\om \|_{H^2
  (\R^+)}^2 . 
\ee
So
the first summand in the r.h.s vanishes as $\xi_1$ approaches
$\xi$. For the second term in the r.h.s. of \eqref{triangle},
first observe that, assuming $\om / 2 \leq
\om_1 \leq 2 \om$,
\begin{equation} \nonumber 
\begin{split}
&
|\phi_{\om} (x ) - \phi_{\om_1} (x)|^2 \ \leq \ 2
|\phi_{\om} (x )|^2 + 2 |\phi_{\om_1} (x)|^2 \\
\ \leq \ &  C \left( \omega^{\f 1 \mu} e^{-2 \sqrt{\om} | x 
  |} + {\omega_1}^{\f 1 \mu} e^{-2 \sqrt{\om_1} | x |} \right) \ \leq \ C
\om^{\f 1 \mu}  e^{ - \sqrt{\om} | x |},
\end{split}
\end{equation}
and the last quantity is an integrable function in the variable $x$. Analogously,
for the last term in the r.h.s. of inequality \eqref{triangle}, one gets
\begin{equation} \nonumber 
\begin{split}
&
|\phi_{\om}' (x) - \phi_{\om_1}' (x)|^2 \ \leq \ 2
|\phi_{\om}' (x )|^2 + 2 |\phi_{\om_1} '(x)|^2 \\
\ \leq \ &  C \left( \omega^{1+\f 1 \mu} e^{-2 \sqrt{\om} | x 
  |} + \omega_1^{1+ \f 1 \mu} e^{-2 \sqrt{\om_1} | x |} \right) \ \leq \ C
\om^{1+\f 1 \mu}  e^{ - \sqrt{\om} | x |}.
\end{split}
\end{equation}
Then, by dominated convergence theorem, one has that the last two
terms in \eqref{triangle} vanish as $\om_1$ goes to $\omega$, thus
\eqref{ziel} is proved and the proof is complete.

\end{proof}
We can now introduce the multi-soliton trasformation $\Sigma$ as the natural
generalization of the soliton transformation $S$ to the star graph $\G$.

\begin{definition} \label{multisol}
Given a function  $\Phi \in H^1_M (\G)$ such that $\Phi (\textsc v) \neq 0$,
the {\em multi-soliton} transformation $\Sigma \Phi$ of $\Phi$ is the
function defined on $\mathcal M$ as
\be \nonumber
(\Sigma \Phi)_j \ : = \ S \Phi_j,
\ee
where $S$ is the soliton transformation 
introduced in Definition \ref{soltrans}.
\end{definition}

\begin{remark} \label{propsigma}
The multi-soliton transformation $\Sigma$ inherits the following
properties from the soliton transformation $S$: 
\begin{enumerate}
\item $\Sigma$ is continuous from the space of the functions in $H^1_M (\G)$
that do not vanish at the vertex, to $H^1_M (\G)$.
\item $\Sigma$ preserves the mass distribution on the edges of the
  star graph, i.e. $\|(\Sigma\Phi)_j\|_{L^2(\RE^+)}^2 =
  \|(\Phi)_j\|_{L^2(\RE^+)}^2$, and the absolute value of the function in the
  vertex, namely $\Sigma\Phi(\textsc{v}) = |\Phi(\textsc{v})|$.   
\item For every $\Phi$ in the domain of $\Sigma$, 
\be \label{energy-decrease}
E (\Sigma \Phi, \G)
  \leq E (\Phi, \G),
\ee 
where equality holds if and only if $\Phi \in \mathcal M$, up to a
constant phase factor. 
\item The multi-soliton transformation $\Sigma$ acts trivially on
  $\mathcal M$.  In particular, $\Sigma
  \Psi_{\om} = \Psi_{\om}$.
\end{enumerate}
\end{remark}

\section{Local minimality of $\Psi_\om$ in $\M$} \label{section:results}
Here we treat the finite-dimensional problem of the local minimality
of $\Psi_\om$ in the manifold $\M$. In this section we always refer to
the reduced energy $E_r$ (see Definition \ref{eenne}) and then, when possible, avoid any reference to
functions, that can be replaced by points of $\R^N$. In particular,
according to Remark \ref{2.2}, the bound state $\Psi_\om$ corresponds
to the point $\widetilde P : = (\wt{\ul m}, \wt a) = (M/N, \dots, M/N,
\widetilde a)$.
However, in order to prove even this finite-dimensional
minimality, along the proof we must come back to function
representation and make use of a well-known result of the
Grillakis-Shatah-Strauss theory on stability of standing waves (see
Theorem 3.4 in \cite{gss1}), that we straightforwardly apply in order
to get inequality \eqref{pre-claim}.

\begin{proposition} \label{finitlocal}
Fixed $M >0$, the point $\wt P$ is a strict local minimum for the function $E_r
(\ul m, a)$ defined in \eqref{eenne}.
\end{proposition}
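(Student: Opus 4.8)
The plan is to establish the two standard ingredients of strict local minimality: that $\wt P$ is a critical point of $E_r$, and that the Hessian of $E_r$ at $\wt P$ is positive definite. Criticality comes essentially for free. Since $\Psi_\om$ is a critical point of $E(\cdot,\G)$ constrained to $H^1_M(\G)$ and $\M\subset H^1_M(\G)$ contains $\Psi_\om=\Phi_{\wt{\ul m},\wt a}$, the differential of the energy annihilates the tangent space $T_{\Psi_\om}\M\subset T_{\Psi_\om}H^1_M(\G)$, so $\wt P$ is automatically critical for $E_r=E(\cdot,\G)|_{\M}$. Equivalently, one checks directly from \eqref{energydec} that $\partial_{m_i}E_r$ vanishes at $\wt P$ because all masses equal $M/N$, while $\partial_aE_r=0$ reproduces the vertex condition $N\phi_\om'(\zeta)=-\al\phi_\om(\zeta)$, i.e.\ the transcendental equation defining $\wt a$, which holds because $\Psi_\om$ is stationary.

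First I would compute the Hessian of $E_r$ at $\wt P$ from \eqref{energydec} and \eqref{effe}. Writing $b:=\partial_m^2F(M/N,\wt a)$ and $c:=\partial_a^2F(M/N,\wt a)$, the symmetry of $\wt P$ forces the mixed $(m_i,a)$ second derivatives to cancel: since $\partial_{m_i}E_r=\partial_mF(m_i,a)-\partial_mF(m_N,a)$ with $m_N=M-\sum_{j<N}m_j$, differentiating in $a$ and evaluating at equal masses gives $\partial_m\partial_aF(M/N,\wt a)-\partial_m\partial_aF(M/N,\wt a)=0$. Hence the Hessian is block diagonal: the $(N-1)\times(N-1)$ block in the mass variables equals $b(I+J)$ with $J$ the all-ones matrix, and the $a$-entry equals $Nc$. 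Because $I+J$ has eigenvalues $N$ (once) and $1$ (with multiplicity $N-2$), the whole Hessian is positive definite if and only if the two scalar inequalities $b>0$ and $c>0$ hold.

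The hard part is establishing $b>0$ and $c>0$, which cannot be read off \eqref{effe} directly and is where the one-dimensional theory enters. The idea is to reduce from $N$ halflines to two, gluing a pair of edges into a full line carrying a delta interaction of strength $-2\al/N$, tuned so that the symmetric profile survives unchanged: the vertex condition $N\phi_\om'(\zeta)=-\al\phi_\om(\zeta)$ becomes the jump condition $2\phi_\om'(\zeta)=-(2\al/N)\phi_\om(\zeta)$. On the associated two-dimensional manifold of glued soliton pieces at fixed total mass, parametrized by the common vertex value $a$ and by the mass $\mu_1$ of one half (the other being $\mu_2=m-\mu_1$), the energy reads $F(\mu_1,a)+F(\mu_2,a)$, and at the even configuration $\mu_1=\mu_2=M/N$, $a=\wt a$ its Hessian is the diagonal form $\mathrm{diag}(2b,2c)$ (the mixed term again cancelling by symmetry). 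Fukuizumi, Ohta and Ozawa \cite{foo08} prove that the even standing wave of this line problem is orbitally stable throughout the subcritical range $0<\mu<2$; through Theorem 3.4 of \cite{gss1} this is upgraded to the coercivity estimate \eqref{pre-claim}, whose positivity on the reduced manifold forces both $b>0$ and $c>0$.

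The main obstacle is therefore precisely this sign determination: once the block-diagonal form is accepted, the entire $N$-dimensional problem collapses to the two scalar positivity statements, and these are exactly what the two-halfline reduction plus the \cite{foo08}--\cite{gss1} input deliver. Everything else is routine: the cancellation of the mixed derivative, the elementary spectrum of $I+J$, and the bookkeeping of the effective coupling $-2\al/N$. I expect the only delicate point beyond the sign of $b$ and $c$ to be the passage from non-strict to \emph{strict} inequality: here one uses that the states in $\M$ are real and positive and that the geometry carries no translation symmetry, so neither the phase nor the translation zero mode lies in the tangent space to $\M$, and the coercivity of \cite{gss1} indeed transfers to a strict bound.
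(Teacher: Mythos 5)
Your overall architecture coincides with the paper's: criticality of $\wt P$, the block-diagonal Hessian with blocks $\partial_m^2 F\,(\mathbb I+\mathbb J)$ and $N\partial_a^2F$, the spectrum $\{N,1\}$ of $\mathbb I+\mathbb J$, the reduction to two halflines glued into a line carrying a delta of strength $2\al/N$, and the input from \cite{foo08} together with Theorem 3.4 of \cite{gss1}. All of that matches the paper's proof step by step.

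There is, however, one genuine gap, at the very point where you pass from the coercivity estimate to the strict inequalities $b>0$ and $c>0$. The GSS bound \eqref{pre-claim} gives, along the test curve $t\mapsto \Phi_{\ul m(t),\wt a}$,
\begin{equation*}
f(t)-f(0)\ \geq\ c\,\|\tau\Phi_{\ul m(t),\wt a}-\tau\Psi_\om\|_{L^2(\R)}^2\ \geq\ 0 ,
\end{equation*}
and by itself this only yields $f''(0)\geq 0$, i.e.\ $b\geq 0$ (and likewise $c\geq 0$), which does not suffice for a \emph{strict} local minimum. To upgrade to $b>0$ one must prove that the right-hand side is bounded below by $c\,t^2$, i.e.\ that the curve actually moves in $L^2$ at nonzero speed: the derivative $\partial_m\bigl[\phi_{\omega(m,a)}(\cdot+\xi(m,a))\bigr]=\partial_\omega\phi_\omega\,\partial_m\omega+\partial_x\phi_\omega\,\partial_m\xi$ must not vanish identically at $(M/N,\wt a)$. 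This is a non-degeneracy property of the parametrization $(m,a)\mapsto\phi_{\omega(m,a)}(\cdot+\xi(m,a))$, not a statement about zero modes of the linearization: your closing remark attributes the strictness to the phase and translation modes lying outside $T\M$, but that only controls the infimum over the orbit inside the GSS estimate; it says nothing about whether the finite-dimensional chart is an immersion. The paper settles this by differentiating the identity $g(\sqrt\omega\xi)=m/a^{2-\mu}$ (Eq.\ \eqref{keep1}) in $m$ (resp.\ $a$), using the strict monotonicity of $g$ to conclude that $\partial_m\omega$ and $\partial_m\xi$ cannot both vanish, and then invoking the linear independence of $\partial_\omega\phi_\omega$ and $\partial_x\phi_\omega$ to get the lower bound \eqref{claim}. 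Without some version of this argument your proof only delivers positive semi-definiteness of the Hessian.
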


\begin{proof}
First, notice that $\wt P$ is an internal point for the domain of
$E_r$, therefore it suffices to show that $\wt P$ is a stationary point
for $E_r$ and that the Hessian matrix of $E_r$ evaluated at $\wt P$ is
positive definite.

The fact that $\Psi_{\om}$ is a stationary point immediately gives
that $\widetilde P$ is a stationary point for $E_r$, so we turn to the
study of the sign of the Hessian matrix.

\noindent
By straightforward calculations, 
\begin{eqnarray} \nonumber
\f{\partial^2 E_r} {\partial m_i\partial m_j} (\widetilde P)& = & (1 +
\delta_{ij})
\f{\partial^2
  F} {\partial m^2} \left( \f M N, \wt a \right) \\
\label{straight}
\f{\partial^2 E_r}{\partial a^2}(\widetilde P)& = &  N \f{\partial^2 F
    }{\partial a^2} \left( \f M N, \wt a \right) \\ \nonumber
\f{\partial^2 E_r}{\partial a\partial m_i}(\widetilde P)& = & 0,
\end{eqnarray}
with $1\leq i,j \leq N-1$.

\noindent
Consequently, the Hessian matrix of $  E_r$ computed at
$\widetilde P$ is a $N \times N$ block matrix. The high $(N-1) \times
(N-1)$ left block, that
we call $\mathbb H_1$, 
reads
$$
\mathbb H_1 : = \f{\partial^2
  F} {\partial m^2} \left( \f M N, \wt a \right) ( \mathbb J +
\mathbb I)
$$
where $\mathbb J$ is the matrix with all elements equal to one. By
elementary linear algebra, one immediatly finds that $\mathbb H_1$ 
has two eigenvalues: $N \f{\partial^2
  F} {\partial m^2} \left( \f M N, \wt a \right)$ with multiplicity
$1$, and $\f{\partial^2
  F} {\partial m^2} \left( \f M N, \wt a \right)$ with multiplicity
$N-2$. Therefore, $\mathbb H_1$ is positive definite if and only if 
\be \label{eins}
\f{\partial^2
  F} {\partial m^2} \left( \f M N, \wt a \right) > 0.
\ee
The second diagonal block, denoted by $\mathbb H_2$, is $1 \times 1$ and reduces
to $ N \f{\partial^2 F
    }{\partial a^2} \left( \f M N, \wt a \right)$, so it is positive defined if and only if 
\be \label{zwei}
 \f{\partial^2 F
    }{\partial a^2} \left( \f M N, \wt a \right) > 0.
\ee
Thus, in order to  prove that $\widetilde P$ is actually a local minimum of the
function $E_r$, one has to
prove \eqref{eins} and \eqref{zwei}.

To prove \eqref{eins}, 
consider the curve in the manifold $\M$ given by $\Phi_{\ul{m}(t),\wt a}$, with
$$ m_1 (t) = \f M N + t, \ m_2(t) = \dots = m_{N-1} (t) = \f M N,
 \qquad t \in (-\varepsilon, \varepsilon)$$
and define the function 
$$
f (t) : = E_r (\ul{m}(t), \wt a).
$$
By
definition of partial derivative and using \eqref{straight}, one has
$$
f'' (0) \ = \ \f{\partial^2 E_r} {\partial m_1^2} (\widetilde P
  ) \ = \ 2 \f{\partial^2
  F} {\partial m^2} \left( \f M N, \wt a \right).
$$
So condition \eqref{eins}  reduces to $f'' (0) > 0$. 
Now, let us write the curve in
terms of elements of $\M$:
\begin{align} \nonumber
\Phi_{\ul m(t),\wt a} = 
& \left( \begin{array} {c}
  \phi_{\omega(M/N+t,\wt a)}
( \cdot + \xi (M/N + t, \wt a)) \\  \phi_{\omega(M/N,\wt a)}
( \cdot + \xi (M/N ,  \wt a))\\  \vdots \\ \phi_{\omega(M/N-t,\wt a)}
( \cdot + \xi 
  (M/N - t,  \wt a)) 
\end{array}
\right)
\end{align}
It transpires that $\Phi_{\ul m(t),\wt a}$
varies with $t$ in
the first and the $N$.th components only. Since the total mass is conserved,
as $t$ changes {\em there is a transfer of
mass from one edge to the other without changing the value at the
vertex}. We define the operator 
$$\tau :  L^2 ({\mathcal{G}}) \longrightarrow L^2 (\R)$$
acting as $\tau \Xi \ = \ \eta$, with
\be \nonumber 
\eta (x) \ = \ \chi_{\R^+} (x) (\Xi)_{1} (x) + \chi_{\R^-} (x)
(\Xi)_{N} (-x),
\ee
where, as usual, we denoted by $(\Xi)_j$ the component of the wave function
$\Xi$ on the $j$.th edge.
In other words, $\eta$ is the function on the line obtained from $\Xi$
by matching together  edges $1$ and $N$ and neglecting all the others. Therefore, 
\begin{eqnarray} & & \nonumber
f(t) - E_r (\widetilde P) \ = \ E_r (\ul{m}(t),
  a) - E_r (\widetilde P) \\ \nonumber & = & \sum_{i = 1}^{N-1}
F ( m_i (t), \wt a) + F \left( \f M N - t, \wt a \right)
- N  F \left( \f M N, \wt a \right) \\ \nonumber
& = &  F \left( \f M N + t, \wt a \right) +  F \left( \f
M N - t, \wt a \right)  - 2 F   \left( \f M N, \wt a
\right) \\ \nonumber
& = & E_{2 \alpha / N} ( \tau \Phi_{\ul m (t), \wt a}, \R ) 
- E_{2 \alpha / N} ( \tau \Psi_\om,\R ), \\ \label{gsss}
\end{eqnarray}
where we exploited the definition \eqref{effe} of the function $F$ and
introduced the functional
\be \nonumber 
\begin{split}
E_{2 \alpha / N} (u, \R) \ : =  & \ \f 1 2 \| u' \|^2_{L^2 (\R)} - \f 1 {2 \mu +
  2}
 \| u \|^{2\mu+2}_{L^{2 \mu + 2} (\R)}
  - \frac{\alpha} N | u (0) |^2
\end{split}
\ee
acting on $H^1 (\R)$. Notice that $E_{2\alpha / N} (\cdot, \R)$ is the
functional representing the energy associated to the focusing Schr\"odinger
equation  with power nonlinearity $2 \mu + 1$ and a delta
interaction placed at the origin with strength $2 \alpha / N$ (see \cite{foo08} and \cite{an-jpa}).

Furthermore,
$$
(\tau \Psi_{\omega}) (x)
\ = \ [ (\mu + 1) \om ]^{\f 1 {2 \mu}} \cosh^{-\f 1 \mu} \left(
\mu \sqrt \om (|x| + \zeta) \right),$$
%
%
with $\zeta$ given by Eq. \eqref{states2},   which is {the ground state of the NLS} on the line with a delta
interaction located at the origin, of strength
$\alpha' = \f {2 \alpha} N$. In that case
(see \cite{foo08} and \cite{anv}), such a state is known to be a stable global minimizer of
the constrained problem, so it falls into the scope of Theorem
3.4 of \cite{gss1}. In the notation of that theorem, $T(s(u)) = 1$
since we are dealing with real functions. Therefore, for
$\varepsilon$ sufficiently small, the theorem yields
\begin{eqnarray} \nonumber
f (t) - E_r (\widetilde P) & = &
 E_{2 \alpha / N} ( \tau \Phi_{\ul m (t), \wt a} , \R) 
- E_{2 \alpha / N} ( \tau \Psi_{\om} , \R) \\
& \geq & c \| \tau \Phi_{\ul{m}(t),
  \wt a} - \tau \Psi_{\om} \|_{L^2(\R)}^2, \label{pre-claim}
\end{eqnarray}
for any $t \in (-\varepsilon, \varepsilon)$. We claim that  there exists a positive constant $c$ such that, for $\ve$ small enough, 
\begin{equation}\label{claim}
\| \tau \Phi_{\ul{m}(t),
  \wt a} - \tau \Psi_\om \|_{L^2(\R)}^2 \geq c\, t^2.
\end{equation}
 This gives the bound $f (t) - E_r (\widetilde P) \geq c
 \,t^2$, which, together with $f (0) - E_r (\widetilde P)
 =0$ and $f' (0) = 0$, implies $ 
f'' (0) \geq  c > 0$, and concludes the proof of  \eqref{eins}.

\noindent
It remains to prove the claim \eqref{claim}. To this aim, we write 
\[ \begin{aligned} 
&\| \tau \Phi_{\ul{m}(t),  \wt a} - \tau \Psi_\om \|_{L^2(\R)}^2 \\ 
= &  \int_0^\infty \big(\phi_{\omega(M/N+t, \wt a)} ( x + \xi (M/N+t,
\wt a)) - \phi_{\omega(M/N,\wt a)} ( x + \xi (M/N ,  \wt a))\big)^2dx
\\  
&+  \int_0^\infty \big(\phi_{\omega(M/N-t, \wt a)} ( x + \xi (M/N-t,
\wt a)) - \phi_{\omega(M/N,\wt a)} ( x + \xi (M/N ,  \wt a))\big)^2dx 
\end{aligned}\]
and note that  
\[  \| \tau \Phi_{\ul{m}(t),
  \wt a} - \tau \Psi_\om \|_{L^2(\R)}^2\big|_{t=0}=0 \qquad \text{and} \qquad \frac{d}{dt}\| \tau \Phi_{\ul{m}(t),
  \wt a} - \tau \Psi_\om \|_{L^2(\R)}^2\bigg|_{t=0}=0.\]
To conclude the proof of \eqref{claim} it is enough to show that 
\[ \frac{d^2}{dt^2}\| \tau \Phi_{\ul{m}(t),
  \wt a} - \tau \Psi_{\om, 0} \|_{L^2(\R)}^2\bigg|_{t=0} \geq c.
\]
We compute the second derivative at $t=0$  and obtain  (we omit the
dependence of $\omega$ and $\xi$ on $m$ and $a$) 
  \begin{equation}\label{down}\begin{aligned}
&  \frac{d^2}{dt^2}\| \tau \Phi_{\ul{m}(t),
  \wt a} - \tau \Psi_\om \|_{L^2(\R)}^2\bigg|_{t=0}  \\
  = & \ 4 \int_0^\infty\left( \pd{\phi_{\omega} }{\ome}( x + \xi) \, \pd{\omega}{m}+\
  \pd{\phi_{\omega}}{x}(x+ \xi ) \,  \pd{\xi}{m}\right)^2dx \bigg|_{(m,a)=(M/N,\wt a)}. \end{aligned}\end{equation}
 Next we  prove that the integrand is not identically equal to zero,
 which in turn implies that  the second derivative in $t=0$ is
 strictly positive.
 
In Eq. \eqref{keep1} we set $z=\sqrt \omega \xi$ and take  the derivative with respect to $m$, this  gives the identity 
 \[\pd{g}{z}\, \pd{z}{m}= \frac{1}{a^{2-\mu}}, \] which tells us that $ \pd{z}{m}\neq 0$. Since 
 \[
  \pd{z}{m} = \frac{\xi}{2\sqrt\omega} \pd{\omega}{m} + \sqrt \omega \pd\xi{m}, 
 \]
 we conclude that $\pd{\omega}{m} $ and $\pd\xi{m}$ cannot be both equal to zero (recall that $\xi\neq 0$ whenever $\alpha\neq 0$). Since the functions $ \pd{\phi_{\omega} }{\ome}(x)$ and $\pd{\phi_{\omega}}{x}(x )$ are linearly independent,  the integrand in Eq. \eqref{down} does not vanish identically.

In order to prove \eqref{zwei} one
proceeds analogously. First define a curve $\Phi_{\ul{\wt m},
  a(t)}$ in 
the manifold $\M$, with 
$$
a (t) = \wt a + t, \qquad t \in (- \varepsilon,
\varepsilon).
$$
Then,
introduced the function $h(t) : = E_r (\ul{\wt m}, a
  (t))$, one immediately has
$$
h'' (0) = \f {\partial^2 E_r}{\partial a^2} ( \widetilde P
) \ = \ N \f {\partial^2 F}{\partial a^2} \left( \f M N, \wt a
\right).
$$
Hence, to prove \eqref{zwei} it is enough to show that
$h''(0)>0$. Notice that, for any $t$, the function $\Phi_{\ul{\wt m},
  a(t)}$  
is changing symmetrically on any edge,
so that, using Theorem 3.4 of \cite{gss1} again,
\begin{eqnarray*} & &
h(t) - E_r (\widetilde P) \ = \ N F \left( \f M N,
a (t)\right) - N  F \left( \f M N,
\wt a \right) \\ & = & \f N 2 \big(E_{2 \alpha/N} (\tau \Phi_{\ul{\wt m}, a(t)},\R)
- E_{2\alpha/N} (\tau \Psi_\om, \R), \big) \\
& \geq & \f N 2 c \| \tau \Phi_{\ul{\wt m}, a(t)} -  \tau \Psi_\om \|_{L^2 (\R)}^2 .
\end{eqnarray*}
To prove the inequality $h''(0)>0$ it is enough to show that  $\| \tau
\Phi_{\ul{\wt m}, a(t)} -  \tau \Psi_\om \|_{L^2 (\R)}^2 \geq
c\,t^2$ for $t$ small enough. Arguing as in the proof of
\eqref{claim}, we conclude that this is certainly true if
$\pd{\omega}{a} $ and $\pd\xi{a}$  are not both equal to zero. To see
that this is actually the case, we take  the derivative of
Eq. \eqref{keep1} with respect to $a$ and obtain an identity which is
not compatible with $\pd{z}{a}= 0$. This  in turns implies that or
$\pd{\omega}{a} \ne0 $ or $\pd\xi{a} \neq 0$.    

\end{proof}

\begin{remark} \label{alter}
As an alternative formulation of Proposition \ref{finitlocal}, we have
that $\Psi_\om$ is a {\em strict} local minimizer for the restriction of $E (\cdot,
\G)$ to the manifold $\M$.
\end{remark}

\section{Orbital stability} \label{section:orbital}
The next step consists in passing from the local minimality of
$\Psi_\om$ in $\M$ to
the local minimality of $\Psi_\om$ on $H^1_M (\G)$. This step is
immediate once one considers that: first, the reduction from $H^1_M(\G)$
to $\M$ through the transformation $\Sigma$ 
lowers the energy level; second, $\Psi_\om$ is invariant under
$\Sigma$; third, $\Sigma$ is continuous at $\Psi_\om$. Some care must
be dedicated to the fact that multiplying by a constant phase factor
does not lower the energy. Anyway, one has

\begin{proposition} \label{riduzionismo}
$\Psi_\om$ is a strict (up to multiplication by phase) local minimizer of $E(\cdot, \G)$ in $H^1_M (\G)$.
\end{proposition}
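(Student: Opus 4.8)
The plan is to transfer the finite-dimensional minimality established in Proposition \ref{finitlocal} (equivalently Remark \ref{alter}) up to the full space $H^1_M(\G)$ by composing with the multi-soliton transformation $\Sigma$, exploiting the three features collected in Remark \ref{propsigma}: the energy-decrease property \eqref{energy-decrease}, the triviality of $\Sigma$ on $\M$ together with $\Sigma \Psi_\om = \Psi_\om$, and the continuity of $\Sigma$ at $\Psi_\om$. Since $E(\cdot,\G)$ is phase-invariant, I expect the whole content of the proof to be organizational, the only genuinely delicate point being the recovery of strictness \emph{up to phase}.

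First I would check that $\Sigma$ is defined on an entire $H^1_M(\G)$-neighborhood of $\Psi_\om$. Indeed the vertex value $\widetilde a = \Psi_\om(\textsc{v})$ is strictly positive, and evaluation at $\textsc{v}$ is continuous on $H^1(\G)$, so there is a neighborhood $U$ of $\Psi_\om$ on which every $\Phi$ obeys $\Phi(\textsc{v}) \neq 0$; hence $\Sigma\Phi \in \M$ is well-defined for $\Phi \in U$. Next, by Remark \ref{alter} there is a neighborhood $V$ of $\Psi_\om$ such that $E(\Theta,\G) \geq E(\Psi_\om,\G)$ for every $\Theta \in \M \cap V$, with equality only at $\Theta = \Psi_\om$; here no phase ambiguity arises because all elements of $\M$ are positive. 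Using the continuity of $\Sigma$ at $\Psi_\om$ and $\Sigma\Psi_\om = \Psi_\om$, I would shrink $U$ so that $\Sigma(U) \subseteq V$.

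With these neighborhoods fixed, for every $\Phi \in U$ the chain
\be \nonumber
E(\Phi,\G) \ \geq \ E(\Sigma\Phi,\G) \ \geq \ E(\Psi_\om,\G)
\ee
holds, the first inequality by \eqref{energy-decrease} and the second by the local minimality on $\M$, since $\Sigma\Phi \in \M \cap V$. This already shows that $\Psi_\om$ is a local minimizer of $E(\cdot,\G)$ in $H^1_M(\G)$.

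The hard (indeed the only nonroutine) part will be the strictness. Suppose $\Phi \in U$ satisfies $E(\Phi,\G) = E(\Psi_\om,\G)$; then both inequalities above are equalities. From equality in \eqref{energy-decrease} (property 3 of Remark \ref{propsigma}) I would deduce that $\Phi = e^{i\theta}\Theta$ for some constant phase $\theta$ and some $\Theta \in \M$; since $\Sigma$ depends on its argument only through $|\Phi(\textsc{v})|$ and the edgewise masses, it ignores the global phase and acts trivially on $\M$, so $\Sigma\Phi = \Theta$. From equality in the second inequality and the strict minimality on $\M \cap V$ I would get $\Sigma\Phi = \Psi_\om$, whence $\Theta = \Psi_\om$ and therefore $\Phi = e^{i\theta}\Psi_\om$. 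This is precisely the asserted strict (up to multiplication by a phase) local minimality, and combined with Theorem 3 in \cite{gss1} it yields the orbital stability of Theorem 1.
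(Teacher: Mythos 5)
Your argument is correct and follows essentially the same route as the paper: compose the strict local minimality on $\M$ (Proposition~\ref{finitlocal}/Remark~\ref{alter}) with the energy-decreasing, continuous, $\Psi_\om$-fixing map $\Sigma$, after noting that a small enough neighborhood of $\Psi_\om$ contains only functions that do not vanish at $\textsc{v}$. The only (immaterial) organizational difference is that the paper first restricts to real functions and recovers the phase at the end via $E(|e^{-i\theta}\Phi|,\G)\leq E(e^{-i\theta}\Phi,\G)$, whereas you treat complex functions directly through the ``equality up to a constant phase factor'' clause of \eqref{energy-decrease}; both are sound.
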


\begin{proof}
First we prove that $\Psi_\om$ is a {\em strict} local minimizer among
{\em real} functions in $H^1_M (\G)$. 
According to Remark \ref{alter},
$\Psi_{\om}$ is a strict local minimum of $E(\cdot, \G)$ restricted to $\M$.
This means
that there exists $\varepsilon > 0$ such that, if $\Phi \neq
\Psi_\omega$ is a real element
of $H^1_M (\G)$ with $\Phi (\textsc v) \neq 0$,
and $ \| \Sigma \Phi - \Psi_\om
\|_{H^1 (\G)} < \varepsilon$, then 
\be \label{pre-ineq}
E(\Psi_{\om}, \G) \leq E(\Sigma
\Phi, \G),
\ee
where equality holds if and only if $\Sigma \Phi = \Psi_\om$.

Moreover, by Rermark \ref{propsigma} we know that $\Sigma$ is
continuous at $\Psi_{\om}$, then there exists $\delta
> 0$ such that if $0 < \| \Phi - \Psi_\om
\|_{H^1 (\G)} < \delta$, then 
$$ \varepsilon \ > \ \| \Sigma \Phi - \Sigma \Psi_\om
\|_{H^1 (\G)} \ = \  \| \Sigma \Phi - \Psi_\om
\|_{H^1 (\G)}, 
$$
where we used the invariance of $\Psi_{\om}$ under the action of $\Sigma$. But
then, using  inequalities \eqref{pre-ineq} and  \eqref{energy-decrease},
$$
E (\Psi_{\om}, \G) \ \leq \ E (\Sigma \Phi, \G) \ \leq \ E (\Phi, \G),
$$
where the first inequality becomes an equality if and only if $\Sigma
\Phi = \Psi_\om$, while the second one becomes an equality if and only
if $\Phi = \Sigma \Phi$. Then, we proved that there exists a $\delta >
0$ such that if $0 < \| \Phi - \Psi_\om \|_{H^1 (\G)} < \delta$, then 
$E (\Phi, \G) < E (\Psi, \G)$,
so we have that $\Psi_\om$ strictly minimizes $E (\cdot, \G)$ locally
among the real functions in $H^1_M (\G)$. Of course, extending the
analysis to non-real functions, $\Psi_\om$ cannot be a strict
minimizer due to phase invariance of the energy functional: $E (e^{i
  \theta} \Psi_\om, \G) \ = \ E (
  \Psi_\om, \G)$. 
However, for any $\Phi$ outside the phase orbit of
  $\Psi_\om$ whose distance from the orbit is less than $\delta$, one has $E (\Phi
  , \G) \ > \ E (
  \Psi_\om, \G)$. Indeed, there exists $\theta \in [0, 2 \pi)$
    s.t. 
\be \nonumber 
   \delta >  \|
    \Phi - e^{i \theta}\Psi_\om \|_{H^1_M (\G)} =  \| e^{-i
      \theta}\Phi  - \Psi_\om \|_{H^1_M (\G)} 
\geq \| | e^{-i \theta}\Phi | -  \Psi_\om \|_{H^1_M (\G)},
\ee
so that, since $ | e^{-i \theta}\Phi |$ is real and different from $\Psi_\om$,
\be \label{real}
 E (\Psi_\om, \R) \ < \ E ( | e^{-i \theta}\Phi |, \G ).
\ee
On the other hand, a straigthforward computation gives
$$
 E ( | e^{-i \theta}\Phi |, \G )  \ \leq \ E ( e^{-i \theta}\Phi , \G ), 
$$
that, together with  \eqref{real}, concludes the proof.

\end{proof}

So we can conclude by invoking Grillakis-Shatah-Strauss theory.

\medskip

\noindent
{\em Proof of Theorem 1.} Owing to Theorem 3 in \cite{gss1}, the local
minimality property in $H^1_M (\G)$ is equivalent to orbital
stability. The proof is complete.
\qed

\end{document}